\DeclareMathOperator{\Hilb}{Hilb}
\DeclareMathOperator{\len}{len}
\newcommand{\gota}{\mathfrak{a}}
\newlength{\defbaselineskip} \setlength{\defbaselineskip}{\baselineskip}
\theoremstyle{plain}
\newtheorem{thm}{Theorem}[section]
\newtheorem{cor}[thm]{Corollary}
\newtheorem{con}[thm]{Conjecture}
\newtheorem{df}[thm]{Definition}
\newtheorem{lema}[thm]{Lemma}
\newtheorem{obs}[thm]{Proposition}
\newtheorem{exm}[thm]{Example}
\newtheorem{fact}[thm]{Fact}
\newtheorem{rem}[thm]{Remark}
\newtheorem{pr}{Algorithm}
\newtheorem{pro}{Main Problem}
\theoremstyle{definition}  %
 \numberwithin{equation}{section}
\def\p{\mathbb{P}}
\def\r{\mathbb{R}}
\def\n{\mathbb{N}}
\def\c{\mathbb{C}}
\def\C{\mathbb{C}}
\def\fa{\begin{fact}}
\def\kfa{\end{fact}}
\newcommand{\fromto}[2]{#1, \dotsc, #2}
\newcommand{\set}[1]{\left\{#1\right\}}
\DeclareMathOperator{\Spec}{Spec}
\def\p{\mathbb{P}}
\def\a{\mathbb{A}}
\def\ob{\begin{obs}}
\def\kob{\end{obs}}
\def\dow{\begin{proof}}
\def\kdow{\end{proof}}
\def\tw{\begin{thm}}
\def\ktw{\end{thm}}
\def\hip{\begin{con}}
\def\khip{\end{con}}
\def\lem{\begin{lema}}
\def\klem{\end{lema}}
\def\ex{\begin{exm}}
\def\prog{\begin{pr}}
\def\kprog{\end{pr}}
\def\wn{\begin{cor}}
\def\kwn{\end{cor}}
\def\uwa{\begin{rem}}
\def\kuwa{\end{rem}}
\def\kex{\end{exm}}
\def\dfi{\begin{df}}
\def\kdfi{\end{df}}
\definecolor{zielony}{rgb}{0.5, 0.9, 0.1}
\definecolor{czerwony}{rgb}{0.9, 0.2, 0.1}
\definecolor{niebieski}{rgb}{0.3, 0.1, 0.9}
\newtheorem{lemma}[thm]{Lemma}
\newtheorem{example}[thm]{Example}
\theoremstyle{definition}
\begin{document}
\title{Examples of $k$-regular maps and interpolation spaces}
\author{Mateusz Micha\l ek}
\address{
University of California, Berkeley\\
Department of Mathematics\\
951 Evans Hall, Suite 968\\
Berkeley, CA 94720-384 \newline
Freie Universit\"at\\
 Arnimallee 3\\
 14195 Berlin, Germany\newline
Polish Academy of Sciences\\
         ul. \'Sniadeckich 8\\
         00-956 Warsaw\\
         Poland}
\email{wajcha2@poczta.onet.pl}
\author{Christopher Miller}
\address{University of California, Berkeley\\
Department of Mathematics\\
951 Evans Hall, Suite 3840\\
Berkeley, CA 94720-384 }
\email{crmiller@math.berkeley.edu}
\subjclass[2010]{Primary: 14R10 Secondary: 32E30, 57R42, 13H10, 54C25}
\keywords{Interpolation spaces; $k$-regular map; Areole}
\thanks{The work was supported by a grant Polish National Science Center grant no. 2012/05/D/ST1/01063}

\begin{abstract}
A continous map $f: \C^n \rightarrow \C^N$ is $k$-regular if the image of any $k$ points spans a $k$-dimensional subspace. It is an important problem in topology and interpolation theory, going back to Borsuk and Chebyshev, to construct $k$-regular maps with small $N$ and only a few nontrivial examples are known so far. Applying tools from algebraic geometry we construct a 4-regular polynomial map $\C^3\rightarrow \C^{11}$ and a 5-regular polynomial map $\C^3\rightarrow \C^{14}$.
\end{abstract}

\maketitle

\section{Introduction}
The motivation for our work comes from topology and interpolation theory. Let $X$ be a real manifold. The following definition and problem are due to Borsuk \cite{Borsuk}.
\begin{df}[$k$-regular map]
A map $f: X \rightarrow \r^N$ is $k$-regular if the images of any $k$ points are linearly independent. 
\end{df}
\begin{pro}\label{pr1}
Given $n$ and $k$ what is the smallest possible $N$ such that there exists a $k$-regular map  $\r^n \rightarrow \r^N$?
\end{pro}
We also have the following definition and interpolation problem going back to Chebychev, Haar \cite{haar__kreg} and Kolmogorov \cite{kolmogorov__kreg}.
\begin{df}[$k$-interpolating space]
A vector space $V$ of continuous functions on $X$ is called $k$-interpolating if for any distinct points $P_1,\dots,P_k\in X$ and any scalars $\lambda_1,\dots,\lambda_k\in \r$ there exists a function $f\in V$ such that $f(P_i)=\lambda_i$ for all $i=1,\dots,k$.
\end{df}
\begin{pro}\label{pr2}
Given $X=\r^n$ and $k$ what is the smallest possible dimension of a $k$-interpolating space?
\end{pro}
Problems \ref{pr1} and \ref{pr2} are classically known to be equivalent.
\tw\label{tw:equivalence}
There exists an $N$-dimensional $k$-interpolating space $V$ if and only if there exists a $k$-regular map $f:\r^n \rightarrow \r^N$. 
\ktw
For the sake of completeness we provide a short and easy proof in Section \ref{sec:defandpre}. Both problems attracted attention of many mathematicians throughout the years. Just to name a few: P.~Blagojevi{\'c}, M.~Chisholm, F.~Cohen, D.~Handel, W.~L{\"u}ck, J.~Segal, B.~Shekhtman V.~A.~Vassiliev, G.~Ziegler \cite{Ziegler, kreg1, kreg2, kreg3, kreg4, kreg5, interpolation__Shekhtman__poly, kreg6}.
Although the problems were originally posed in the fifties over $\r$ one can, and indeed one does \cite{BJJM, Ziegler2}, address similar questions on $\C$. In general, it is easier to construct $k$-regular maps on $\r$. Indeed, if a $k$-regular map on $\C$ takes real values on $\r$, then it is also $k$-regular on $\r$. On the other hand, even a polynomial $k$-regular map on $\r$ does not have to be $k$-regular on $\C$. Indeed:
$$\r^n\ni(x_1,\dots,x_n)\rightarrow (1,x_1,\dots,x_n,x_1^2+\dots+x_n^2)\in\r^{n+2}$$
is $3$-regular. However, there exists a $3$-regular map $\C^n\rightarrow \C^N$ if and only if $N\geq 2n+1$. 

The techniques applied so far to construct interpolating spaces rely on very clever constructions of polynomials with special properties \cite[Example 2.6]{Ziegler},\cite{interpolation__Shekhtman__poly}. Our techniques are very different and work over the complex numbers. We build on methods introduced in \cite{BJJM}. The construction starts from a polynomial map obtained as a projection from a sufficiently high Veronese embedding. The center of the projection has to be chosen in such a way that it does not intersect an areole - a local analog of a secant variety, cf.~Definitions \ref{def:sec}, \ref{def:aur}. These techniques are in general nonconstructive and nonpolynomial. The main aim of this paper is to present explicit examples where we can obtain interpolating spaces spanned by polynomials.
\tw
The map:
$$\C^3\ni(s,t,u) \mapsto (1,s,t,u,st,su,tu,s^2,s^3-t^2,t^3-u^2,u^3)\in \C^{11}$$
is $4$-regular.
The map:
$$\C^3\ni(s,t,u) \mapsto (1,s,t,u,st,su,tu,s^2,t^2,u^2,u^3,t^4-s^3,u^4-t^3,s^4)\in \C^{14}.$$
is $5$-regular.
\ktw
In particular, the first result improves \cite{interpolation__Shekhtman__poly}. Our construction of such maps relies on the study of the Gorenstein locus of the punctual Hilbert scheme \cite{MR3404648}. Intuitively, we first consider an algebraic case when the $k$ points are arbitrarily close. Then we use a torus action to pass from a local to global construction. Our results are polynomials which is important from the point of view of applications. Indeed, some authors define interpolating spaces to be spanned by polynomials \cite[Definition 6.5.2]{interpolation__Shekhtman__ideal}. 

There are two main difficulties in solving Main Problem \ref{pr1}. First, it is hard to show that a given map is $k$-regular. Second, even finding polynomial examples is in general difficult. The following result, essentially due to Alexander and Hirshowitz \cite{MR1311347}, shows that generic polynomials will not provide useful examples. In particular, a map will be $k$-regular only if the polynomials are chosen in a very special way. 

\tw\label{th:generic}
Generic polynomials $f_1,\dots, f_N\in \C[x_1,\dots,x_n]$ of degree $d\geq k-1>1$ define a $k$-regular map $\C^n\rightarrow \C^N$ if and only if $N\geq (n+1)k-1$, unless:
\begin{enumerate}
\item $n=1$ in which case $N\geq k-1$,
\item $n=2$, $k=4$ in which case $N\geq 9$,
\item $n=2$, $k=5$ in which case $N\geq 13$,
\item $k=3$ in which case $N\geq 3n-1$.
\end{enumerate}
\ktw
For the sake of completeness we sketch the proof in Section \ref{sec:defandpre}.

The very-well developed techniques to prove that for given $n$ and $N$ there is no $k$-regular map rely on tools from algebraic topology. It would be of great interest to obtain sharper results for polynomial $k$-regular maps. 
Currently, it is not known if there exists a continuous $k$-regular map $\C^n \rightarrow
\C^N$, such that there is no polynomial $k$-regular map $\C^n \rightarrow \C^N$.



\section{Definitions and Preliminaries}\label{sec:defandpre}




We begin this section with a proof of Theorem  \ref{tw:equivalence}. We will then go over the important definitions.

\begin{proof}[Proof of Theorem \ref{tw:equivalence}]
We begin with the forward direction. Pick a basis for $V = \langle f_1,\ldots, f_N\rangle$. Let $f= (f_1,\ldots f_N) : \r^n \rightarrow \r^N$. Take any $x_1,\ldots x_k \in \r^n$ and any $\beta_i \in \r$ such that 
$$\sum_{i=1}^k \beta_i f(x_i) =0,\text{ i.e. }\sum_{i=1}^k \beta_i f_j(x_i) =0 \text{ for all } j.$$ 
Multiplying each equation by arbitrary $\lambda_j$, we can write

$$\sum_{i=1}^k \beta_i [\sum_{j=1}^N \lambda_j f_j](x_i) =0.$$

However, $\sum_{j=1}^N \lambda_j f_j$ is an arbitrary element of $V$. Taking $\sum_{j=1}^N \lambda_j f_j = g_l\in V$ to be the function which assigns $g_l(x_i) = \delta_{li}$, we get $\beta_i =0$ for each $i$. Hence $\{f(x_i)\}$ is linearly independent, and $f$ is $k$-regular. 

For the reverse direction define $V = \{L \circ f \mid L: \mathbb{R}^N \rightarrow \mathbb{R} \text{ linear} \}$. We can obviously assign arbitrary values to linearly independent elements, i.e.~$V$ is $k$-interpolating.
\end{proof}

A classical example of a $k$-regular map is the $r$-th Veronese embedding $v_r$ for $r\geq k-1$. Geometrically it is an embedding (or its restriction to an open affine subset) of $\p^n$ in $\mathbb{P}^{\binom{n+r}{r}-1}$ given by the very ample line bundle $\mathcal{O}(r)$. In other terms, the map associates to a class of a vector the class of its $r$-th symmetric power.
Explicitly in coordinates, $v_r$ is given by all homogeneous monomials of degree $r$. 

Restricting to the affine space $\a^n=\{[x_0,\dots,x_n]\in\p^n: x_0 =1\}$, we get a map $\a^n \rightarrow \a^{\binom{n+r}{n}-1}$. Explicitly it is given by all monomials in $x_1,\dots,x_n$ of degree at most $r$.

It is a well-known fact that if $r\geq k-1$, then for any points $P_1,\dots,P_k\in\a^n$ their images $v_r(P_i)$ are linearly independent. It follows that for $r\geq k-1$ the map $v_r:\a^n\rightarrow \a^{\binom{n+r}{n}-1}$ is $k$-regular.
This has a very nice generalization in scheme theoretic language, that played a crucial role in Grothendieck's construction of the Hilbert scheme. The Veronese map is also universal, in the sense that any map given by (nonhomogenoues) polynomials of degree (at most) $r$ is given by composing $v_r$ with a projection. 
\lem
Let $f$ be any map $f:\a^n \overset{v_r}{\rightarrow} \a^{\binom{n+r}{n}-1} \overset{\pi}{\rightarrow} \a^N$ where $\pi$ has center $E$. Then $f$ is $k$-regular if and only if for all distinct $x_1,\dots,x_k\in v_r(\a^n)$ the vector space  $\langle x_1,\ldots ,x_k \rangle$ intersects the vector space $E$ only at $0$. 
\klem
\dow 
The lemma follows from the fact that the image of any vector subspace $V_1$ under a linear projection with center $E$ is a vector space of dimension $\dim V_1-\dim (V_1\cap E)$.
\kdow
The previous Lemma shows that the following geometric object will be of special importance for us.
\dfi[Secant variety]\label{def:sec}
  For any embedded variety $X$ we define its \textbf{$k$-th secant variety} as
   \begin{align*}
     \sigma_k(X)&:= \overline{\bigcup_{\fromto{x_1}{x_k}\in X}  \langle \fromto{x_1}{x_k}\rangle},
   \end{align*}
   where $<\cdot>$ is a linear span in either projective or affine space.
\kdfi
Let us prove Theorem \ref{th:generic}.
\dow 
A generic map of polynomials corresponds to a composition of a Veronese map and a generic linear projection. A generic vector space $E$ will intersect some linear space spanned by $v_r(x_1),\dots,v_r(x_k)$ if and only if $\dim E$ is greater than the codimension of the union $\bigcup_{x_i\in\a^n} <0,v_r(x_1),\dots,v_r(x_k)>$. By Alexander-Hirshowitz Theorem the dimension of the latter equals $kn+k-1$ unless we are in one of the cases specified in the theorem. We refer to \cite[Theorem 6.6 and Corollary 6.7]{BJJM} for more details.
\kdow

Recall that an affine scheme $S$ can be identified with an ideal $I_S\subset \C[x_1,\dots,x_n]$. A scheme is, by definition, finite if $\C[x_1,\dots,x_n]/I_S$ is a finite dimensional vector space (over $\C$). We call $\len S:=\dim_{\C}  \C[x_1,\dots,x_n]/I_S$ the \emph{length} of $S$. If $I_S$ consists of all polynomials vanishing at fixed $k$ distinct points, then the length of $S$ equals $k$. Such finite schemes are called smooth. The vanishing locus of polynomials in $I_S$ is called the support of $S$.
There exist schemes of length $k$ with support of smaller cardinality. In particular, if the support is just one point we call the scheme \emph{local}\footnote{The corresponding algebra $\C[x_1,\dots,x_n]/I_S$ has only one maximal ideal and is also called local.}. 
\begin{example}
The scheme corresponding to the ideal $(x^3)\subset \C[x]$ is a finite local scheme of length $3$. The unique maximal ideal is $m=(x)\subset \C[x]/(x^3)$. 
\end{example}
Recall that a linear span of a set is the zero locus of all linear polynomials that vanish at this set. This motivates the definition of the linear span of any scheme.
\dfi[Linear Span of a Scheme]
The linear span of the any scheme $S$ is defined to be the zero locus of all linear equations in the ideal of $S$:
$$<S>:=\{x:l(x)=0\text{ for all }l\in I_S,\deg l=1\}.$$
\kdfi
In general $\dim <S>\leq \len S-1$. Moreover, for $r\geq \len S-1$ we have $\dim <v_r(S)>=\len S-1$. This allows us to identify the scheme $S$ with the point in  the Grassmannian $Gr(\len S-1,{n+r\choose r})$. Let $U$ be the image in $Gr(k-1,{n+r\choose r})$ of all smooth schemes of length $k$. The points in the closure of $U$ also correspond to schems of length $k$ called \emph{smoothable}, i.e.~limits of smooth schemes. The closure of $U$ is called the smoothable component of the Hilbert scheme and we will denote it by $\Hilb^{sm}$.

We present the following two definitions that are punctual variants of the secant variety.
\dfi[Areole]\label{def:aur}
Our first definition was introduced in \cite{BJJM}.
Let $X$ be a variety and $p\in X$ any point.
   The \textbf{$k$-th areole} at $p$ is
   \begin{align*}
     \gota_k(X,p)&:= \overline{ \bigcup \set { \langle R \rangle \mid   
                              R\text{ is smoothable in }X \text{, supported at } p \text{ and } \len(R) \leq k } }.
   \end{align*}
    We also define
   \begin{align*}
     \tilde\gota_k(X,p)&:= \overline{ \bigcup \set {\lim <x_1,\dots,x_k>: x_i\in X\text{ and }\lim x_i=p} },
   \end{align*}
   i.e.~ the union is taken over all $k$-tuples of points in $X$ converging to $p$. In other words to construct $\gota_k(X,p)$ we take the linear spans of all possible limits of $k$-points converging to $p$. To construct $\tilde\gota_k(X,p)$ we take the limits of linear spans.
\kdfi
In general, the linear span of the limit scheme is contained in the limit of the linear spans, hence $\gota_k(X,p)\subset\tilde\gota_k(X,p)$. However, in our case, i.e.~when $X$ is a high Veronese embedding of an affine or projective space, the limit of linear spans and the linear span of the limit are of the same dimension, hence must be equal. In particular, $\gota_k(X,p)=\tilde\gota_k(X,p)$. For similar reasons, in our cases, the closures in the definitions of areoles are not needed - the union is already closed.
We introduce $\tilde\gota_k(X,p)$ as recently it turned out to be of interest in the study of complexity of matrix multiplication algorithms \cite{JaLandsberg}. Moreover, although not appearing explicitly, the variety $\tilde\gota_k(X,p)$ played an important role in the proof of the following theorem.
\tw\label{tw:avoidareole}
Consider the Veronese map $v_r:\a^n\rightarrow \a^{n+r\choose r}$ composed with a projection $\pi:\a^{n+r\choose r}\rightarrow \a^N$. If the center $E$ of projection $\pi$ is disjoint from the areole $\gota_k(v_r(\a^n),v_r(P))$ for some point $P\in \a^n$, then $\pi\circ v_r$ is $k$-regular in some open neighborhood of $P$.
\ktw
\dow 
This is a special case of \cite[Lemma 3.2]{BJJM}
\kdow
Unfortunately providing a precise description of the areole is hard, as classifying local smoothable schemes is difficult. One idea is to pass to a different class of schemes - Gorenstein schemes - that turn out to be sufficient and easier to classify.

There are several equivalent definitions of Gorenstein schemes. The one below we present is in fact a theorem known as Macaulay duality. We chose it as it is both simple and useful in classification.
\dfi[Gorenstein local scheme/ Macaulay duality, \cite{iarrobino_kanev_book_Gorenstein_algebras},Chapter 2.3] 
We say that a local scheme $S$ is Gorenstein if and only if its ideal equals the ideal of differentials that annihilate some polynomial $f_S$. We call $f_S$ the dual socle generator. 
\kdfi
\ex 
All schemes $\C[x]/(x^k)$ are Gorenstein, as $(\partial^k/\partial^ky)$ is the ideal annihilating $y^{k-1}$. Also $\C[x_1,x_2]/(x_1x_2,x_1^2-x_2^2)$ is Gorenstein, as it is the ideal of annihilators of $y_1^2+y_2^2$. 

On the other hand $\C[x_1,x_2]/(x_1^2,x_1x_2,x_2^2)$ is not Gorenstein. Indeed, the hypothetical dual socle generator would have to be of degree at most $1$, as all differentials of order at least two would have to annihilate it. However, than there would also exist a differential operator of order $1$ that annihilates it.
\kex
Gorenstein schemes are important for us due to the following result.
\lem\cite[Lemma 2.3]{BB14}
Let $R$ be a finite scheme (embedded in $\a^n$). Then
\[
  \langle R \rangle = \bigcup_{\begin{array}{c}
                                 Q \subset R \\ Q \text{ is Gorenstein}
                               \end{array}}
                                \langle Q \rangle.
\]
\klem
Hence, to avoid the areole it is enough to avoid linear spans of Gorenstein schemes. 
In the classification of Gorenstein schemes, the Hilbert function plays a crucial role.
\dfi[Hilbert Function]
Consider a finite local scheme $S$ with the associated ideal $I_S\subset \C[x_1,\dots,x_n]$, with $m=(x_1,\dots,x_n)$ the unique maximal ideal in $ \C[x_1,\dots,x_n]/I_S$. 
We define a Hilbert function $H_S:\n\rightarrow \n$ by:
$$H_S(i)=\dim_\C m^i/m^{i+1}.$$
\kdfi
\ob\label{properties}
Using notation as above Hilbert function has the following properties.
\begin{enumerate}
\item If $H_S(i)=0$ then $H_S(j)=0$ for all $j\geq i$. This follows immediately, as $H_S(i)=0$ if and only if $m^i=m^{i+1}$.
\item $H_S(0)=1$, as $\C[x_1,\dots,x_n]/m\simeq \C$.
\item $H_S$ has nonzero values only on a finite interval $[0,\dots,d]$, as $S$ is assumed finite.
\item $\sum_i H_S(i)=\len S$.
\item If $S$ is Gorenstein we have $H_S(d)=1$ (the last nonzero value), consult e.g.~\cite[2.3]{iarrobino_kanev_book_Gorenstein_algebras}.
\item If $H_S(1)=1$, then $H_S(i)\leq 1$, as $S$ can be represented by in ideal in $\C[x]$.
\end{enumerate}
\kob

\section{Explicit examples of polynomial $k$-regular maps}


To obtain a $k$-regular map we first take the $k-1$st Veronese embedding and then project from subspaces disjoint from the areola. We start by showing that we can always project away from monomials of highest degree that are not pure powers.

\begin{lemma}
Given the Veronese embedding $v_{k-1}: \mathbb{A}^n \rightarrow \mathbb{A}^{\binom{n+k-1}{k-1}}$, pick any coordinate with corresponding monomial of degree $k-1$ that is not a pure power. The line defined by this coordinate is disjoint from the areole $\gota_{k-1}(v_{k-1}(\a^n),v_{k-1}(0))$.
\end{lemma}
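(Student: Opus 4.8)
The plan is to reduce to Gorenstein schemes and then read off the top graded piece of their spans. By the span lemma \cite{BB14} quoted above, $\langle R\rangle=\bigcup_{Q\subset R\text{ Gorenstein}}\langle Q\rangle$ for every finite scheme $R$. Applying this to each smoothable $R$ in the definition of the areole (and using that, as noted above, the defining union is already closed in this Veronese setting), the areole is contained in the union of the subspaces $\langle v_{k-1}(Q)\rangle$ over all Gorenstein schemes $Q$ supported at the origin with $\len Q$ at most the relevant bound. Writing $e_\beta$ for the coordinate vector attached to the chosen monomial $x^\beta$ (with $|\beta|=k-1$ and $\beta$ not a pure power), it therefore suffices to show $e_\beta\notin\langle v_{k-1}(Q)\rangle$ for every such $Q$; then the line $\C\cdot e_\beta$ meets each $\langle v_{k-1}(Q)\rangle$ only at $0$.

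Next I would identify $\langle v_{k-1}(Q)\rangle$ dually. Since coordinates on the target are indexed by monomials of degree $\le k-1$, a linear form on the target is a polynomial $g$ with $\deg g\le k-1$, and it vanishes on $v_{k-1}(Q)$ exactly when $g\in I_Q$. Thus $\langle v_{k-1}(Q)\rangle$ is the annihilator of $(I_Q)_{\le k-1}$, so $e_\beta\in\langle v_{k-1}(Q)\rangle$ iff no element of $I_Q$ of degree $\le k-1$ has a nonzero coefficient on $x^\beta$. By Macaulay duality $I_Q=\mathrm{Ann}(f_Q)$ for the dual socle generator $f_Q$, whose degree is the socle degree $d$ of $Q$; Proposition \ref{properties} (the Hilbert function sums to $\len Q$, starts at $1$, and has last nonzero value $1$) gives $d\le \len Q-1$, with $H_Q(1)\ge 2$ lowering $d$ still further.

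I would then split on $d$. If $d<k-1$, the monomial $x^\beta$, read as the operator $\partial^\beta$ of order $k-1>d$, annihilates $f_Q$, so $x^\beta\in I_Q$ is itself a degree-$(k-1)$ witness and $e_\beta\notin\langle v_{k-1}(Q)\rangle$; this settles every scheme except those with $d=k-1$. By the bound, these have length exactly $k$ and Hilbert function $(1,\dots,1)$, hence are curvilinear (Proposition \ref{properties}(6)). Presenting such a $Q$ as a limit of $k$ points along a smooth curve germ $t\mapsto a(t)$ with $a(0)=0$ and tangent $v=a'(0)\ne 0$, I have
$$\langle v_{k-1}(Q)\rangle=\mathrm{span}\Big\{\,w_j:=\tfrac{d^j}{dt^j}\big|_{0}\,v_{k-1}(a(t)),\ j=0,\dots,k-1\,\Big\}.$$
The crux is the degree-$(k-1)$ graded piece: each coordinate $a(t)^\alpha$ with $|\alpha|=k-1$ vanishes to order at least $k-1$, so $w_j$ has zero component there for $j<k-1$, while $w_{k-1}$ projects onto $(k-1)!\,\nu_{k-1}(v)$, where $\nu_{k-1}(v)=(v^\alpha)_{|\alpha|=k-1}$ is the degree-$(k-1)$ Veronese image of $v$. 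Hence the degree-$(k-1)$ part of the span is the single line $\C\cdot\nu_{k-1}(v)$, and $e_\beta\in\langle v_{k-1}(Q)\rangle$ would force $e_\beta$ proportional to $\nu_{k-1}(v)$.

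This is exactly where the hypothesis is used: since $v\ne 0$, some $v_i\ne 0$, so $\nu_{k-1}(v)$ carries the nonzero entry $v_i^{k-1}$ at the pure-power position $(k-1)e_i$, which differs from $\beta$ because $\beta$ is not a pure power; thus $\nu_{k-1}(v)$ is never a multiple of $e_\beta$, and $e_\beta\notin\langle v_{k-1}(Q)\rangle$. I expect the curvilinear case to be the main obstacle: the Gorenstein reduction and the Hilbert-function bound render all non-curvilinear (and all shorter) schemes harmless via the trivial first case, isolating the length-$k$ curvilinear family as the only one that must be defeated, and the content of the lemma is precisely that the pure-power monomials are the ones that can occupy the top graded piece of an osculating space.
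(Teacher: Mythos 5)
Your proof is correct, and it shares the paper's skeleton: reduce to finite local schemes supported at the origin of length at most $k$, dispatch the case where the Hilbert function vanishes in degree $k-1$ by exhibiting a degree-$(k-1)$ element of the ideal with nonzero $x^\beta$-coefficient, and observe that the only surviving schemes are the curvilinear ones $\Spec \C[x]/(x^k)$. The differences are in execution. First, the paper never invokes the Gorenstein reduction or Macaulay duality in this lemma: for any local scheme $S$ with $H_S(k-1)=0$, the mixed monomial itself lies in $I_S$ by a degree count (its image lands in $\mathfrak{m}^{k-1}=0$), so your route through \cite{BB14} and $\partial^\beta f_Q=0$ proves the same membership with heavier, though perfectly valid, tools. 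Second, and more substantively, in the curvilinear case the paper argues dually, constructing the explicit kernel form $L_m-\frac{c}{a_1^{k-1}}L_{x_i^{k-1}}$ for $x_i\mid m$ (using that $c\neq 0$ forces $a_1\neq 0$ and that $\alpha(x_i)^{k-1}=a_1^{k-1}x^{k-1}$ in $\C[x]/(x^k)$), whereas you argue primally, identifying $\langle v_{k-1}(Q)\rangle$ with the osculating span $\mathrm{span}\{w_0,\dots,w_{k-1}\}$ and computing that its degree-$(k-1)$ graded piece is the single line $\C\cdot\nu_{k-1}(v)$. These are dual descriptions of one phenomenon---on the span of a curvilinear scheme every top-degree coordinate is proportional to the corresponding monomial in the tangent direction---but yours yields the sharper structural statement and explains conceptually why the pure powers are exactly the coordinates that cannot be projected away, while the paper's is more elementary (no derivatives or limits) and is the computation reused nearly verbatim in the proofs of Theorems \ref{tw:3} and \ref{tw:4}. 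One point you should tighten: justify $\langle v_{k-1}(Q)\rangle=\mathrm{span}\{w_0,\dots,w_{k-1}\}$ directly from the ideal, namely $g\in I_Q$ iff $g(a(t))\equiv 0 \pmod{t^k}$ iff $L_g(w_j)=0$ for all $j\leq k-1$, rather than via limits of $k$ points, since the limit phrasing silently relies on the paper's asserted (but not proved) remark that limits of spans coincide with spans of limits in this setting.
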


\begin{proof}
Pick a mixed monomial $m$ of degree $k-1$ and let $L_m$ be the linear function on $\mathbb{A}^{\binom{n+k-1}{k-1}}$ corresponding to the given coordinate. Our claim is equivalent to showing that a point $P$ that has all coordinates equal to zero, but $L_m(P)\neq 0$, does not belong to $\gota_{k-1}(v_{k-1}(\a^n),v_{k-1}(0))$. We will show that for any local scheme $S$ of length $k$ supported at $0\in \a^n$ we have a point $P\not\in <v_{k-1}(S)>$. This last claim will be established by providing a linear form $L_S$ that vanishes on $<v_{k-1}(S)>$, but $L_S(P)\neq 0$.  
We have:
\begin{center}
\begin{tikzcd}
\C[L_1,L_{x_1},L_{x_2},\ldots ,L_{{x_1}^{k-1}},\dots,L_m,\dots,L_{{x_n}^{k-1}}] \arrow{d}{v}\\ \C[x_1,\ldots ,x_n] \arrow{r}{\mathfrak{i}} & \C[x_1,\dots ,x_n]/I_S \\
\end{tikzcd}
\end{center}
where $v$ corresponds to Veronese, sending the variable $L_{\tilde m}$ to $\tilde m$ for any monomial $\tilde m$. The map $\mathfrak{i}$ corresponds to the embedding of $S$ in $\a^n$. The vanishing of any linear form $L$ on $v_{k-1}(S)$ is by definition equivalent to the fact $L$ is in $\ker(\mathfrak{i} \circ v)$. If $H_S(k-1)=0$ then, by degree count, $L_m$ is in $\ker(\mathfrak{i} \circ v)$. Hence we only have to consider schemes such that $H_S(k-1)\neq 0$. By Lemma \ref{properties}, as $\len S=k$, we must have $H_S(i)=1$ for $i<k$ and $H_S(i)=0$ for $i\geq k$. There is only one (up to isomorphism) scheme with this Hilbert function: $\Spec \C[x]/(x^k)$. Hence we may replace $\mathfrak{i}$ by $ \alpha:\C[x_1,\dots,x_n]\rightarrow \C[x]/(x^k)$. 


Now $(\alpha \circ v)(L_m) =c x^{k-1}$ for some $c\in\C$. If $c=0$ we are done, so we assume $c\neq 0$.
Pick any variable $x_i|m$. We know that the image of $L_{x_i}$ equals $\sum_{j=1}^{k-1} a_jx^i$. We have $a_1\neq 0$, as $c\neq 0$. Then the linear form $L_m-\frac{c}{a_1^{k-1}}L_{x_i^{k-1}} \in \ker(\alpha \circ v)$ is nonzero on $P$.
\end{proof}
By Theorem \ref{tw:avoidareole}, we can project from the line corresponding to a given mixed monomial of highest degree, and maintain local $k$-regularity. Moreover, in the proof of the lemma we only used $L_{\tilde m}$ where either $\tilde m=m$ or $\tilde m$ is a pure power. Hence, we may in fact project away from all mixed powers of highest degree.

\begin{lemma}\label{lem:action}
Let $f = (f_1,f_2,\ldots f_N) :\C^n \rightarrow \C^N$ be a polynomial map that is $k$-regular on some open neighborhood of $0\in \C^n$.  If there exists a positive weighting of the $x_i$ such that each $f_i$ is homogeneous, then the map is globally $k$-regular.
\end{lemma}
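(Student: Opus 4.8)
The plan is to exploit the positive weighting to build a $\C^*$-action under which $f$ is equivariant, and then to use this action to transport the local $k$-regularity near $0$ to an arbitrary $k$-tuple of points. Let $w_1,\dots,w_n>0$ be the weights making each $f_i$ homogeneous, say of weighted degree $d_i$, and define a $\C^*$-action on the source by $\lambda\cdot(x_1,\dots,x_n)=(\lambda^{w_1}x_1,\dots,\lambda^{w_n}x_n)$ and on the target by $\lambda\cdot(y_1,\dots,y_N)=(\lambda^{d_1}y_1,\dots,\lambda^{d_N}y_N)$. Homogeneity of each $f_i$ gives exactly $f_i(\lambda\cdot P)=\lambda^{d_i}f_i(P)$, that is $f(\lambda\cdot P)=\lambda\cdot f(P)$, so $f$ is equivariant with respect to these two actions.

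The crucial observation is that for each fixed $\lambda\neq 0$ the target action is the invertible linear map $\diag(\lambda^{d_1},\dots,\lambda^{d_N})$, and invertible linear maps preserve linear independence. Given this, I would take any $k$ distinct points $P_1,\dots,P_k\in\C^n$ and argue that $f(P_1),\dots,f(P_k)$ are linearly independent as follows. Since all weights are strictly positive, $\lambda\cdot P_i\to 0$ as $\lambda\to 0$, so for some $\lambda\neq 0$ close enough to $0$ all the points $\lambda\cdot P_i$ lie in the neighborhood of $0$ on which $f$ is $k$-regular; moreover the $\lambda\cdot P_i$ remain pairwise distinct because the action is a bijection for $\lambda\neq 0$. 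Hence $f(\lambda\cdot P_1),\dots,f(\lambda\cdot P_k)$ are linearly independent by local $k$-regularity. Since $f(\lambda\cdot P_i)=\lambda\cdot f(P_i)=\diag(\lambda^{d_1},\dots,\lambda^{d_N})\,f(P_i)$, applying the inverse of this linear map shows that $f(P_1),\dots,f(P_k)$ are themselves linearly independent. As the $P_i$ were arbitrary, $f$ is globally $k$-regular.

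I do not expect a serious obstacle: once equivariance is set up, the conclusion is a one-line group-theoretic transport. The only points needing care are verifying that the two ingredients combine correctly, namely that strict positivity of the weights is precisely what forces $\lambda\cdot P_i\to 0$ (a nonpositive weight would obstruct contracting the points to the origin), and that distinctness of the $P_i$ is preserved by the action so that local $k$-regularity genuinely applies. Constant coordinates such as the leading $1$ in the examples cause no difficulty, as they are homogeneous of weighted degree $0$ and contribute a $1$ on the diagonal, keeping $\diag(\lambda^{d_1},\dots,\lambda^{d_N})$ invertible.
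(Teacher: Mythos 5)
Your proof is correct and is essentially the same argument as the paper's: both use the weighted torus action $\tau_\lambda$ to contract an arbitrary $k$-tuple into the neighborhood of $0$, together with the observation that the induced action on the target is an invertible diagonal scaling (equivalently, a column scaling of the matrix of images) and hence preserves linear independence. The paper phrases this as a contradiction via vanishing maximal minors, while you argue directly by transporting independence back with $\diag(\lambda^{-d_1},\dots,\lambda^{-d_N})$, but this is only a cosmetic difference.
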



\begin{proof}
Pick $\epsilon$ sufficiently small so that $f$ is $k$ regular on the ball $B_\epsilon(0)$. Suppose for contradiction that $f$ is not globally $k$-regular. Then there exists $\{x^i = (x^i_1,x^i_2,\ldots x^i_n)\}_{i=1}^k$ , a set of $k$ points such that the image is not linearly independent. Here the exponent is just an index, and does not indicate any operation. We know that the matrix
\begin{center}
$
\left |
\begin{array}{c c c}
f_1(x^1) & \cdots &f_N(x^1) \\
\vdots & & \\
f_1(x^k) & \cdots & f_N(x^k) \\
\end{array}
\right |
$
\end{center}
does not have maximal rank. So all of its maximal minors vanish. 

Let $w: \{x_1, \ldots, x_n\} \rightarrow \mathbb{N}$ be a positive weighting such that  each $f_i$ is homogeneous of degree $d_i$. Then $f_i(\lambda^{w(x_1)}x_1 ,\ldots , \lambda^{w(x_n)}x_n) = \lambda^{d_i}f_i(x_1,\ldots x_n)$. Define $\tau_\lambda(x^i)$ to be the point $(\lambda^{w(x^i_1)}x^i_1 ,\ldots , \lambda^{w(x^i_n)}x^i_n)$. This function describes the {\em torus action}, which here scales each coordinate by some positive power of $\lambda$. Then we have the equality

\begin{center}
$
\left |
\begin{array}{c c c}
f_1(\tau_\lambda(x^1)) & \cdots &f_N(\tau_\lambda(x^1)) \\
\vdots & & \\
f_1(\tau_\lambda(x^k)) & \cdots & f_N(\tau_\lambda(x^k)) \\
\end{array}
\right |
$
 =
 $
\left |
\begin{array}{c c}
\lambda^{d_1}f_1(x^1) & \cdots \lambda^{d_N}f_N(x^1) \\
\vdots & \\
\lambda^{d_1}f_1(x^k) & \cdots  \lambda^{d_N} f_N(x^k) \\
\end{array}
\right | .
$
\end{center}

All of the maximal minors will still vanish, because we have only scaled the column vectors. So the matrix doesn't have maximal rank, and the row vectors are linearly dependent.

Taking $\lambda$ to be sufficiently small, we have $\tau_\lambda(x^i) \in B_\epsilon(0)$ for all $i \leq k$. But the span of the images of these points is not maximal dimensional, so $f$ is not $k$-regular on $B_\epsilon(0,\ldots,0)$ --- a contradiction.
\end{proof}
\wn\label{wn:1kreg}
The map $\C^n\rightarrow \C^{{n+k-2 \choose k-2}+n}$ given by all monomials of degree at most $k-2$ and $n$ monomials $x_i^{k-1}$ is $k$-regular.
\kwn

\subsection{$\C^3\rightarrow \C^{11}$ 4-regular}
In this section we prove the following theorem.
\tw\label{tw:3}
For any $k>3$ the map:
$$\C^n\rightarrow \C^{{n+k-2 \choose k-2}+1}$$
given by:
\begin{itemize}
\item  All $n+k-3 \choose n$ monomials of degree at most $k-3$,
\item All ${n+k-3 \choose n-1} -n$ monomials of degree $k-2$ that are not pure powers,
\item $n-1$ polynomials $x_{i+1}^{k-1}-x_i^{k-2}$ for $i=1,\dots,n-1$,
\item $x_1^{k-1}$ and $x_n^{k-2}$
\end{itemize}
is $k$-regular. 
\ktw
\wn
For $k=4$ and $n=3$ we obtain a $4$-regular map:
$$\C^3\ni(s,t,u) \mapsto (1,s,t,u,st,su,tu,s^2,s^3-t^2,t^3-u^2,u^3)\in \C^{11}.$$
\kwn
\dow[Proof of Theorem \ref{tw:3}] 
We start with the $k$-regular map constructed in Corollary \ref{wn:1kreg}. Let $P_j$ be the point whose coordinates are zero, except for those coordinates corresponding to $x_j^{k-1}$ and $x_{j+1}^{k-2}$, which are nonzero and equal. We can project consecutively from $P_j$, with $j=1,\dots,n-1$.  In other words we replace the two monomials in the map by their difference. 

Our fist claim is that $P_j$ does not belong to the $k$-th areole. This, by Theorem \ref{tw:avoidareole}, will show that after the projection we obtain a map that is $k$-regular in some neighborhood of zero. This will finish the proof, as all the components of our map are monomials, apart from $n-1$ binomials $x_j^{k-1}-x_{j+1}^{k-2}$. Hence Lemma \ref{lem:action} applies with the weight $\omega(x_j)=(k-1)^{j-1}(k-2)^{n-j}$.

To prove that $P_j$ does not belong to the areole we fix an embedding of a scheme $S$ supported at zero and consider its image under the $k$-regular map constructed so far:

\begin{center}
\begin{tikzcd}
\C[L_{\tilde m}] \arrow{d}{v}\\ \C[x_1,\ldots ,x_n] \arrow{r}{\mathfrak{i}} & \C[x_1,\dots ,x_n]/I_S \\
\end{tikzcd}
\end{center}

If $H_S(k-1)=0$, the linear form $L_{x_j^{k-1}} \in \ker( \mathfrak{i} \circ v)$ satisfies $L_{x_j^{k-1}}(P_j)\neq 0$. Hence $P_j$ is not in the linear span of such $S$.

If $H_{S}(k-1)\neq 0$, by Properties \ref{properties} we must have $H_{S}(j)=1$ for $j\leq k-1$ and $H_S(j)=0$ for $j\geq k$. Hence the algebra of $S$ equals $\c[x]/(x^k)$ and we obtain:

\begin{center}
\begin{tikzcd}
\C[L_{\tilde m}] \arrow{d}{v}\\ \C[x_1,\ldots ,x_n] \arrow{r}{\alpha} & \C[x]/(x^k) \\
\end{tikzcd}
\end{center}

Let us consider the form $L_{x_{j+1}}$ that is mapped to $ (\mathfrak{i} \circ v )(L_{x_{j+1}}) = \sum_{i=1}^{k-1} a_ix^i$. If $a_1=0$, we know that $L_{x_{j+1}^{k-2}}$ is in $\ker (\mathfrak{i} \circ v)$ (as $k>3$). Since $L_{x_{j+1}^{k-2}}(P_j)\neq 0$, this finishes the proof in this case. 

Let $\sum_{i=1}^{k-1} b_ix^i$ be the image of $x_{j}^{k-1}$ under the composition of the above maps.
If $a_1\neq 0$ we consider the linear form $L_{x_j^{k-1}}-\frac{b_1}{a_1^{k-1}} L_{x_{j+1}^{k-1}}$. The linear form clearly belongs to $\ker (\mathfrak{i} \circ v)$  and is nonzero on $P_j$.
\kdow

\subsection{$\C^3\rightarrow \C^{14}$ 5-regular}

\tw\label{tw:4}
For any $k>4$ the map:
$$\C^n\rightarrow \C^{{n+k-3 \choose n}+n+1}$$
given by:
\begin{itemize}
\item  All $n+k-3 \choose n$ monomials of degree at most $k-3$,
\item $n-1$ polynomials $x_{i+1}^{k-1}-x_i^{k-2}$ for $i=1,\dots,n-1$,
\item $x_1^{k-1}$ and $x_n^{k-2}$
\end{itemize}
is $k$-regular. 
\ktw

\wn
For $k=5$ and $n=3$, we obtain a $5$-regular map:
$$\C^3\ni(s,t,u) \mapsto (1,s,t,u,st,su,tu,s^2,t^2,u^2,u^3,t^4-s^3,u^4-t^3,s^4)\in \C^{14}.$$

\kwn

\dow[Proof of Theorem \ref{tw:4}] 
We start with the $k$-regular map constructed in Corollary \ref{wn:1kreg}. We let $m$ be a mixed monomial of degree $k-2$, and let $P_m$ be the point with the only nonzero entry corresponding to $m$. 

To prove that $P_m$ does not belong the the areole we fix an embedding of a scheme $S$ supported at zero and consider its image under the $k$-regular map constructed so far:

\begin{center}
\begin{tikzcd}
\C[L_{\tilde m}] \arrow{d}{v}\\ \C[x_1,\ldots ,x_n] \arrow{r}{\mathfrak{i}} & \C[x_1,\dots ,x_n]/I_S \\
\end{tikzcd}
\end{center}

If $H_S(k-2)=0$ the linear form $L_{m}$ is in the kernel of the composed maps and $L_{m}(P_m)\neq 0$. Hence $P_m$ is not in the linear span of such $S$.

If $H_{S}(k-2)\neq 0$, by Properties \ref{properties}, our Hilbert function must be one of the following:



\begin{align*}
(1^k,0,\dots),(1^{k-1},0,\dots), (1,2,1^{k-3},0,\dots)
\end{align*}
where $1^a$ denotes a sequence of $a$ consecutive ones.

The first Hilbert sequence describes a scheme with the embedding $\alpha$ below, up to isomorphism.

\begin{center}
\begin{tikzcd}
\C[L_{\tilde m}] \arrow{d}{v}\\ \C[x_1,\ldots ,x_n] \arrow{r}{\alpha} & \C[x_1,\dots ,x_n]/(x^k) \\
\end{tikzcd}
\end{center}

Every divisor $x_i | m$ is sent to a term $\alpha (x_i) = a_{i1}x^1 + a_{i2}x^2+ \cdots$. If $a_{i1}=0$ for all $i$, then $\alpha(m$) is degree at least $2(k-2) \geq k$. So $\alpha(m)=0$ , and we can project from  $P_m$. Thus we assume that $a_{i1}$ is nonzero for at least one value of $i$. Then 

\begin{align*}
(\alpha \circ  v) (L_{x_i^{k-2}}) &=  a_{1i}^{k-2}x^{k-2} + (k-2)a_{1i}^{k-3}a_{i2}x^{k-1}\\ 
(\alpha \circ  v) (L_{x_i^{k-1}}) &=  a_{1i}^{k-1}x^{k-1}
\end{align*}

If we write $(\alpha \circ  v) (L_m) = bx^{k-2} + cx^{k-1}$, then the linear form 

$$L_m - \frac{b}{a_{i1}^{k-2}}L_{x_i^{k-2}} - \frac{c-\frac{b}{a_{i1}}(k-2)a_{i2}}{a_{i1}^{k-1}}L_{x_i^{k-1}}$$

does not vanish at $P_m$, and is in $\ker (\alpha \circ  v)$. This finishes the proof in the first case.\\

The second case 
%
is exactly as in the proof of Theorem \ref{tw:3}. For the final case, by \cite[Theorem 2.9]{cn09}, our diagram must have the form

\begin{center}
\begin{tikzcd}
\C[L_{\tilde m}] \arrow{d}{v}\\ \C[x_1,\ldots ,x_n] \arrow{r}{\beta} &\C[x,y]/(xy,x^{k-2}-y^2) \\
\end{tikzcd}
\end{center}

The image of $L_m$ in this algebra is a homogeneous polynomial of degree $k-2 \geq 3$, i.e. $(\beta \circ v) (L_m)= cx^{k-2}$. Let $x_i $ be a divisor of $m$. Then $(\beta \circ v) (L_{x_i}) = ax+by+\cdots$. If $a=0$, then $c=0$ in $(\beta \circ v) (L_m)$, and we can project from $P_m$. 

If we assume that $a \neq 0$, then $L_m - \frac{c}{a^{k-2}}L_{x_i^{k-2}}$ belongs to the kernel of composition and is nonzero at $P_m$.

Thus we have projected from all coordinates corresponding to mixed monomials of degree $k-2$. We now proceed as in Theorem \ref{tw:3}, mapping from coordinates that are zero except at coordinates corresponding to $x_j^{k-1}$ and $x_{j+1}^{k-2}$. This map is $k$-regular on some open neighborhood around the origin. We use the weight $\omega(x_j)=(k-1)^{j-1}(k-2)^{n-j}$, and by Lemma \ref{lem:action}, the map is globally $k$-regular.
\kdow
\section{Open problems}
We finish our article stating challenging problems. 
\begin{enumerate}
\item Do there exist natural numbers $k,n,N$ such that there is a $k$-regular map $\c^n\rightarrow\c^N$, but there is no polynomial $k$-regular map?
\item Does there exist a polynomial $k$-regular map from $\c^3$ to $\c^{10}$ or from $\r^3$ to $\r^{10}$?
\item Can one improve currently best known lower bounds for $N$ under the assumption that the map is polynomial? 
\end{enumerate}
\section*{Acknowledgements}
Micha{\l}ek is grateful to Tadeusz Januszkiewicz for introducing him to the subject and to Jaros{\l}aw Buczy{\'n}ski and Joachim Jelisiejew for many interesting discussions on Gorenstein schemes. In fact a lot of ideas from this paper originated during these discussions. The work was realized as a part of Micha{\l}ek's PRIME DAAD fellowship - in particular Micha{\l}ek would like to thank Bernd Sturmfels and Klaus Altmann for great working conditions. Micha{\l}ek is also a member of AGATES group.
\bibliographystyle{amsalpha}
\bibliography{Xbib}
\end{document}